\newcommand{\tpmod}[1]{{\@displayfalse\pmod{#1}}}
\newtheorem{thm}{Theorem}[section]
\newtheorem{lemma}[thm]{Lemma}
\theoremstyle{remark}
\theoremstyle{definition}
\newtheorem{rem}[thm]{Remark}
\theoremstyle{THM}
\newcommand{\abs}[1]{\left|{#1}\right|}
\def\FF {{\mathcal F}}
\def\GG {{\mathcal G}}
\def\R {{\mathbb R}}
\def\Z {{\mathbb Z}}
\def\Q {{\mathbb Q}}
\def\GG {{\mathcal G}}
\def\F {{\mathbb F}}
\def\Z {{\mathbb Z}}
\def\Q {{\mathbb Q}}
\def\Gal{{\mbox{{\rm{Gal}}}}}
\def\red#1 {\textcolor{red}{#1 }}
\def\blue#1 {\textcolor{blue}{#1 }}
\numberwithin{equation}{section}
\def\Z {{\mathbb Z}}
\begin{document}

\title[Monogenic Cyclic Cubic Trinomials]{Monogenic Cyclic Cubic Trinomials}

\author{Lenny Jones}
\address{Professor Emeritus, Department of Mathematics, Shippensburg University, Shippensburg, Pennsylvania 17257, USA}
\email[Lenny~Jones]{doctorlennyjones@gmail.com}

\date{\today}

\begin{abstract}
 A series of recent articles has shown that there exist only three monogenic cyclic quartic trinomials in ${\mathbb Z}[x]$, and they are all of the form $x^4+bx^2+d$. In this article, we conduct an analogous investigation for cubic trinomials in ${\mathbb Z}[x]$. Two irreducible cyclic cubic trinomials are said to be \emph{equivalent} if their splitting fields are equal. We show that there exist two infinite families of non-equivalent monogenic cyclic cubic trinomials of the form $x^3+Ax+B$. We also show that there exist exactly four monogenic cyclic cubic trinomials of the form $x^3+Ax^2+B$, all of which are equivalent to  $x^3-3x+1$. %Recall that a monic polynomial $f(x)\in \Z[x]$ of degree $n$ that is irreducible over $\Q$ is called \emph{cyclic} if the Galois group over $\Q$ of $f(x)$ is the cyclic group of order $n$, while $f(x)$ is called \emph{monogenic} if $\{1,\theta,\theta^2,\ldots, \theta^{n-1}\}$ is a basis for the ring of integers of $\Q(\theta)$, where $f(\theta)=0$.
\end{abstract}

\subjclass[2020]{Primary 11R16, 11R32}% Secondary 11R32}
\keywords{monogenic, quartic, trinomial, Galois}

\maketitle
\section{Introduction}\label{Section:Intro}

A monic polynomial $f(x)\in \Z[x]$ of degree $n$ that is irreducible over $\Q$ is called \emph{cyclic} if the Galois group over $\Q$ of $f(x)$, denoted $\Gal(f)$, is the cyclic group $C_n$ of order $n$, while $f(x)$ is called \emph{monogenic} if $\{1,\theta,\theta^2,\ldots, \theta^{n-1}\}$ is a basis for the ring of integers of $\Q(\theta)$, where $f(\theta)=0$. Hence, $f(x)$ is monogenic if and only if  $\Z_K=\Z[\theta]$. For the minimal polynomial $f(x)$ of an algebraic integer $\theta$ over $\Q$, it is well known \cite{Cohen} that
\begin{equation} \label{Eq:Dis-Dis}
\Delta(f)=\left[\Z_K:\Z[\theta]\right]^2\Delta(K),
\end{equation}
where $\Delta(f)$ and $\Delta(K)$ are the respective discriminants over $\Q$ of $f(x)$ and the number field $K$.
Thus, from \eqref{Eq:Dis-Dis}, $f(x)$ is monogenic if and only if $\Delta(f)=\Delta(K)$.

In a series of recent articles \cite{HJF1,JonesBAMSEven,JonesQuartic} it was shown that the only monogenic cyclic quartic trinomials are
\begin{equation*}\label{Eq:Complete}
x^4-4x^2+2,\quad x^4+4x^2+2\quad \mbox{and} \quad x^4-5x^2+5.
\end{equation*}
In this article, our focus is on an investigation of the analogous question for monogenic cyclic cubic trinomials.
% \begin{defn}
    We say that two monogenic cyclic cubic trinomials are \emph{equivalent} if their splitting fields are equal. Otherwise, we say they are \emph{distinct}. Trivially, a monogenic cyclic cubic trinomial is equivalent to itself.
 % \end{defn}In particular, we prove the following:
 Our main result is:
\begin{thm}\label{Thm:Main}
  Let $A,B\in \Z$ with $AB\ne 0$.
  \begin{enumerate}
    \item \label{Main:I1} There exist two infinite single-parameter families, $\FF_1$ and $\FF_2$, of monogenic cyclic cubic trinomials of the form $x^3+Ax+B$, such that $\FF_1 \cap \FF_2=\varnothing$ and no two elements of $\FF_1 \cup \FF_2$ are equivalent. %Furthermore, if $f(x)=x^3+Ax+B$ is %monogenic and cyclic such that $3\mid A$ and $A/3$ is squarefree, then %either $f(x)\in \FF_1 \cup \FF_2$ or
        %there exists some $f_{i,k}(x)\in \FF_1 \cup \FF_2$ with $f_{i,k}(x)$ equivalent to $f(x)$.
    \item \label{Main:I2} There are exactly four monogenic cyclic trinomials of the form $x^3+Ax^2+B$:
    \[x^3+3x^2-3,\quad x^3-3x^2+3,\quad x^3-3x^2+1, \quad x^3+3x^2-1;\] and they are all equivalent to $T(x)=x^3-3x+1\in \FF_1$.
  \end{enumerate}
\end{thm}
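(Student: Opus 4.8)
The unifying observation is that an irreducible cubic is cyclic exactly when its discriminant is a nonzero perfect square, and---by \eqref{Eq:Dis-Dis}---is monogenic exactly when that discriminant equals $\Delta(K)$, i.e. the index $[\Z_K:\Z[\theta]]$ is $1$. So I would record the two discriminant formulas $\Delta=-4A^3-27B^2$ for $x^3+Ax+B$ and $\Delta=-B(4A^3+27B)$ for $x^3+Ax^2+B$, and test the index prime-by-prime with Dedekind's criterion (equivalently, the explicit monogenicity criterion for trinomials). Two standing facts will be used constantly: a cyclic cubic field is totally real, so $\Delta>0$; and its only ramified primes are $3$ and primes $\equiv 1\pmod 3$, each of which is then totally ramified.

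For part~(\ref{Main:I1}) the plan is to manufacture the families by solving the square-discriminant condition. Putting $-A=a>0$, the equation $\Delta=E^2$ reads $4a^3=E^2+27B^2=N_{\Q(\sqrt{-3})/\Q}(E+3B\sqrt{-3})$, a norm form; since $2$ is inert and $3$ ramifies in $\Q(\sqrt{-3})$, its solutions split naturally according to whether the prime above $3$ divides $E+3B\sqrt{-3}$, and this dichotomy is exactly ``$3$ ramifies in the splitting field'' versus ``$3$ is unramified.'' I would let $\FF_1$ be the family in which $3$ ramifies (conductor divisible by $9$; this is where $T(x)=x^3-3x+1$, with $\Delta=81$, lives) and $\FF_2$ the family in which $3$ is unramified, each cut out by a single parameter together with coprimality/congruence constraints chosen so that (i) the cubic has no rational root, hence is irreducible with $\Gal(f)=C_3$, and (ii) the polynomial is Eisenstein (or passes Dedekind) at every prime dividing $\Delta$, so the conductor is squarefree away from $3$ and the member is monogenic for \emph{every} parameter value. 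Disjointness and non-equivalence would then follow by exhibiting the conductor $f$ (so $\Delta(K)=f^2$) as an explicit, strictly monotonic function of the parameter in each family, realizing disjoint sets of conductors; since equal splitting fields force equal conductors, no two members can be equivalent.

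For part~(\ref{Main:I2}) the finiteness is the crux, and I would isolate it in two moves. First, a purely local analysis: for a prime $p\neq 3$ dividing $\Delta=-B(4A^3+27B)$, the subcase $p\mid B$, $p\nmid 4A^3+27B$ contradicts the perfect-square condition (Dedekind forces $v_p(B)=1$ while a square forces $v_p(B)$ even), and the subcase $p\nmid B$, $p\mid 4A^3+27B$ produces a double-but-not-triple root modulo $p$, i.e. ramification of type $(2,1)$, impossible in a Galois cubic. Only $p\mid A$, $p\mid B$, $v_p(B)=1$ survives; there $f(x)$ is Eisenstein at $p$, $p\equiv 1\pmod 3$, and $v_p(\Delta)=2$. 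Hence $\Delta=3^{k}P^2$ with $P$ a squarefree product of primes $\equiv 1\pmod 3$, each dividing $\gcd(A,B)$. Second, reading $\Delta=f^2$ as the quadratic $27B^2+4A^3B+f^2=0$ in $B$, integrality of $B$ forces its discriminant to be a square, which rearranges to $4A^6=g^2+27f^2$---again a $\Q(\sqrt{-3})$ norm form, but now subject to $a=A^2$ being a perfect square. Feeding the local data into this equation should force $P=1$ and $3$ to ramify, i.e. conductor $9$ and $\Delta=81$; then $4A^6=g^2+2187$ has the integral solution $A=\pm 3$, $g=27$, and back-substitution into the quadratic gives exactly $B\in\{-1,-3\}$ (for $A=3$) and $B\in\{1,3\}$ (for $A=-3$). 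A final check that each of these four is irreducible (no rational root) and has $\Delta=81=\Delta(K)$ confirms that they are monogenic and cyclic and all generate the conductor-$9$ field, namely the splitting field of $T(x)$.

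The main obstacle is the exhaustiveness of the Diophantine step in part~(\ref{Main:I2}): writing down the four solutions is routine, but proving there are no others amounts to excluding every conductor other than $9$. The decisive sub-step is showing $P=1$---that no prime $\equiv 1\pmod 3$ ramifies---which I would attack either by a descent on the factor $P$ using the factorization $(2A^3-g)(2A^3+g)=27f^2$ coming from $4A^6=g^2+27f^2$, or by reducing to the integral points of the superelliptic curve $g^2=4A^6-2187$ once the conductor has been pinned to $9$. For part~(\ref{Main:I1}), the comparatively delicate points are verifying monogenicity \emph{uniformly} in the parameter (the Dedekind/Eisenstein conditions must hold for all values, not merely generically) and making the conductor an injective function of the parameter so that non-equivalence is immediate.
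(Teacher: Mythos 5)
Your proposal correctly identifies the two structural pillars (square discriminant $\Leftrightarrow$ cyclic, index-one $\Leftrightarrow$ monogenic, tested prime-by-prime), but both central steps are left as plans rather than proofs, and in each case the missing step is precisely where the paper's actual work lies. For part~(\ref{Main:I1}), you never exhibit the families: you describe properties you would like $\FF_1$ and $\FF_2$ to have, but no polynomials are written down, and your guiding requirement --- that each member be monogenic for \emph{every} parameter value via Eisenstein/Dedekind conditions --- cannot be arranged for families of this shape. For a single-parameter family $x^3+A(k)x+B(k)$ whose discriminant is (up to a factor $3^4$) the square of a quadratic polynomial value $\delta(k)$, condition~\eqref{JKS:I1} of Theorem~\ref{Thm:JKS} forces $\delta(k)$ to be squarefree whenever the member is monogenic; a quadratic polynomial takes non-squarefree values for infinitely many $k$ (e.g.\ $49\mid 9k^2+3k+1$ has solutions by Hensel's lemma), so monogenicity necessarily fails for infinitely many parameters, and proving that it \emph{holds} for infinitely many is exactly the point where the paper invokes Nagel's theorem on squarefree values of quadratics. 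The paper's proof writes down $f_{1,k}(x)=x^3-3\delta_1x+(6k+1)\delta_1$ with $\delta_1=9k^2+3k+1$ and $f_{2,k}(x)=x^3-\delta_2x+(2k+1)\delta_2$ with $\delta_2=27k^2+27k+7$, proves monogenicity is \emph{equivalent} to $\delta_i$ squarefree, and then uses Nagel plus Lemma~\ref{Lem:Equiv} (equivalence $\Leftrightarrow$ equal discriminant) to get infinitude and pairwise non-equivalence. Without explicit families and without the squarefree/Nagel ingredient, your part~(\ref{Main:I1}) is not a proof.

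For part~(\ref{Main:I2}), your local analysis ($B$ squarefree; every prime divisor of $\Delta(f)$ divides $A$ or $B$; if $q\mid A$, $q\nmid B$ then $q=3$ with $3\,\|\,A$) matches the paper's summary \eqref{Info:I1}--\eqref{Info:I0}, but the heart of the theorem --- excluding every possibility other than $\Delta(f)=81$ --- is exactly what you concede is unproven: ``should force $P=1$,'' to be attacked ``either by a descent\dots or by reducing to the integral points of the superelliptic curve $g^2=4A^6-2187$.'' Neither attack is carried out, and both are substantially harder than what is actually needed. The paper's route is elementary: since $B$ is squarefree and $\Delta(f)=-B(4A^3+27B)$ is a square, $B\mid 2A$, so $\Delta(f)=B^2Z$ with $Z=-\bigl(4A^3/B+27\bigr)$ a square integer; any prime of $Z$ must divide $A$ or $B$ by \eqref{Info:I2}, and either alternative forces that prime to be $3$, so $Z=3^{2k}$ and $B=-4A^3/(3^{2k}+27)$. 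The cases $k=1,2$ give exactly the four listed trinomials, and the case $k\ge 3$ is eliminated not by conductor theory or Diophantine descent but by a direct computation showing condition~\eqref{JKS:I2} of Theorem~\ref{Thm:JKS} fails at $q=3$ (one checks $(-B)^2A_2^3+B_1^3\equiv 0 \pmod{3}$), so $f$ is not monogenic there. In other words, the finiteness you identify as ``the crux'' is settled by a congruence via the index criterion, not by the norm-form equation $4A^6=g^2+27f^2$; your proposal leaves that crux as a genuine gap.
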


Monogenic cyclic cubic fields have been investigated by various authors \cite{A,Gras1,Gras2,KS}. Gras \cite{Gras1,Gras2} gave three characterizations of monogenic cyclic cubic fields  in terms of the solvability of certain Diophantine equations. More recently, Kashio and Sekigawa \cite{KS} showed that every monogenic cyclic cubic field is the splitting field of $x^3-tx^2-(t+3)x-1$ for some $t\in \Z$, and they gave a description of this characterization in terms of the parameter $t$. Most of these characterizations involve the conductor of the field. Theorem \ref{Thm:Main} represents a new and different approach to the characterization of monogenic cyclic cubic fields with a specific focus on trinomials and their particular forms.
\begin{rem}
  The polynomials $x^3-tx^2-(t+3)x-1$ were first studied by Daniel Shanks \cite{Shanks}, and their splitting fields are known as the \emph{simplest cubic fields}.
\end{rem}

  % Computer computations in this article were done using either MAGMA, Maple or Sage.
\section{Preliminaries}\label{Section:Prelim}
The formula for the discriminant of an arbitrary monic trinomial, due to Swan \cite[Theorem 2]{Swan}, is given in the following theorem. %p. 1105
\begin{thm}
\label{Thm:Swan}
Let $f(x)=x^n+Ax^m+B\in \Q[x]$, where $0<m<n$, and let $d=\gcd(n,m)$. Then $\Delta(f)=$
\[
(-1)^{n(n-1)/2}B^{m-1}\left(n^{n/d}B^{(n-m)/d}-(-1)^{n/d}(n-m)^{(n-m)/d}m^{m/d}A^{n/d}\right)^d.
\]
\end{thm}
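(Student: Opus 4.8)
The plan is to derive the formula from the classical identity relating the discriminant to a resultant, namely $\Delta(f) = (-1)^{n(n-1)/2}\mathrm{Res}(f,f')$ for a monic polynomial $f$ of degree $n$. The entire computation then reduces to evaluating $\mathrm{Res}(f,f')$ in closed form, and the key leverage is the very special shape of the derivative of a trinomial.

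First I would differentiate to obtain $f'(x) = nx^{n-1} + Amx^{m-1} = x^{m-1}\bigl(nx^{n-m} + Am\bigr)$, and set $g(x) = nx^{n-m} + Am$. By multiplicativity of the resultant in its second argument, $\mathrm{Res}(f,f') = \mathrm{Res}(f,x)^{m-1}\,\mathrm{Res}(f,g)$. The first factor is immediate: since $f$ is monic, $\mathrm{Res}(f,x) = (-1)^n f(0) = (-1)^n B$, so $\mathrm{Res}(f,x)^{m-1} = (-1)^{n(m-1)}B^{m-1}$, which already produces the factor $B^{m-1}$ appearing in the statement.

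The substantive step is $\mathrm{Res}(f,g)$. Writing $\mathrm{Res}(f,g) = (-1)^{n(n-m)}n^n\prod_{g(\beta)=0}f(\beta)$ and using that every root $\beta$ of $g$ satisfies $\beta^{n-m} = -Am/n$, I would reduce $f(\beta) = \beta^n + A\beta^m + B$ to $f(\beta) = \tfrac{A(n-m)}{n}\beta^m + B$ by substituting $\beta^n = \beta^m\beta^{n-m}$. The product $\prod_\beta\bigl(\tfrac{A(n-m)}{n}\beta^m + B\bigr)$ is then evaluated by grouping: because $\gcd(m,n-m) = \gcd(m,n) = d$, the quantity $\beta^m$ takes each of its $(n-m)/d$ distinct values exactly $d$ times as $\beta$ runs over the roots of $g$, and the identity $\prod_{k=0}^{e-1}(X - \mu^k Y) = X^e - Y^e$ (with $\mu$ a primitive $e$-th root of unity and $e = (n-m)/d$) collapses the product into a perfect $d$-th power. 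After substituting $\beta^{m(n-m)/d} = (-Am/n)^{m/d}$ and simplifying, the interior factors of $n$ combine so that the prefactor $n^n$ exactly cancels the denominator $n^n$ generated inside the $d$-th power, leaving precisely the bracketed expression of the theorem.

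Finally I would assemble the three pieces and reconcile the signs. The accumulated sign is $(-1)^{n(n-1)/2}(-1)^{n(m-1)}(-1)^{n(n-m)}$, and since $n(m-1) + n(n-m) = n(n-1) \equiv 0 \pmod{2}$, the last two contributions cancel, leaving exactly $(-1)^{n(n-1)/2}$, as required. I expect the main obstacle to be the bookkeeping in the penultimate step: correctly handling the $d$-fold multiplicity of $\beta^m$ when $d = \gcd(n,m) > 1$, and tracking the interlocking powers of $n$, $m$, $n-m$, and $A$ together with the several signs $(-1)^{n/d}$, $(-1)^{m/d}$, $(-1)^{(n-m)/d}$ so that they consolidate into the single factor $(-1)^{n/d}$ inside the bracket. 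The cancellation of the powers of $n$ and the fusion of these signs is exactly where a computational slip is most likely.
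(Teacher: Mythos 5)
The paper contains no proof of this statement at all: it is imported as Swan's theorem with a citation to \cite{Swan}, so there is no internal argument to compare yours against. Your derivation is correct, and it is essentially the classical resultant computation (the route Swan himself takes). The two places you flagged as likely sources of error do in fact go through cleanly. For the multiplicity bookkeeping: writing the roots of $g(x)=nx^{n-m}+Am$ as $\beta_j=\rho\zeta^j$ with $\rho^{n-m}=-Am/n$ and $\zeta$ a primitive $(n-m)$-th root of unity, the order of $\zeta^m$ is $(n-m)/\gcd(m,n-m)=(n-m)/d$, so $\beta_j^m$ runs over $e=(n-m)/d$ values each exactly $d$ times; the inner product then collapses to $B^{e}-(-1)^{e}X^{e}$ with $X=\frac{A(n-m)}{n}\rho^{m}$, and since $\rho^{me}=(-Am/n)^{m/d}$ the signs $(-1)^{(n-m)/d}$ and $(-1)^{m/d}$ fuse into the single $(-1)^{n/d}$ of the statement, exactly as you predicted. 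For the powers of $n$: the prefactor $n^{n}$ from $\mathrm{Res}(g,f)$ cancels the factor $(n^{-n/d})^{d}=n^{-n}$ produced inside the $d$-th power, and the residual signs satisfy $(-1)^{n(m-1)}(-1)^{n(n-m)}=(-1)^{n(n-1)}=1$, leaving only $(-1)^{n(n-1)/2}$. One small point worth adding for completeness: the root-product manipulation tacitly assumes $A\ne 0$, so that $g$ has $n-m$ distinct nonzero roots; since both sides of the asserted identity are polynomials in $A$ and $B$, the case $A=0$ follows from the generic case (or by the direct check $\Delta(x^{n}+B)=(-1)^{n(n-1)/2}n^{n}B^{n-1}$, which agrees with the formula).
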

\noindent
Note that if $f(x)$ is a cubic trinomial, then $f(x)$ is cyclic if and only if $\Delta(f)$ is a square.

The next result, due to Jakhar, Khanduja and Sangwan \cite[Theorem 1.1]{JKS2}, is helpful in determining the monogenicity of a trinomial. We use the notation $q^e\mid \mid W$ to mean that $q^e$ is the exact power of the prime $q$ that divides the integer $W$.
\begin{thm}{\rm \cite{JKS2}}\label{Thm:JKS}
Let $N\ge 2$ be an integer.
Let $K=\Q(\theta)$ be an algebraic number field with $\theta\in \Z_K$, the ring of integers of $K$, having minimal polynomial $f(x)=x^{N}+Ax^M+B$ over $\Q$, with $\gcd(M,N)=d_0$, $M=M_1d_0$ and $N=N_1d_0$. A prime factor $q$ of $\Delta(f)$ does not divide $\left[\Z_K:\Z[\theta]\right]$ if and only if %all of the following statements are true: %
$q$ satisfies one of the following conditions:
\begin{enumerate}[font=\normalfont]
  \item \label{JKS:I1} when $q\mid A$ and $q\mid B$, then $q^2\nmid B$;
  \item \label{JKS:I2} when $q\mid A$ and $q\nmid B$, then
  \[\mbox{either } \quad q\mid A_2 \mbox{ and } q\nmid B_1 \quad \mbox{ or } \quad q\nmid A_2\left((-B)^{M_1}A_2^{N_1}-\left(-B_1\right)^{N_1}\right),\]
  where $A_2=A/q$ and $B_1=\frac{B+(-B)^{q^j}}{q}$ with $q^j\mid\mid N$;
  \item \label{JKS:I3} when $q\nmid A$ and $q\mid B$, then
  \begin{gather*}
   \mbox{either} \quad q\mid A_1 \quad \mbox{and}\quad  q\nmid B_2\\
   \mbox{or}\\
    q\nmid A_1B_2^{M-1}\left((-A)^{M_1}A_1^{N_1-M_1}-\left(-B_2\right)^{N_1-M_1}\right),
  \end{gather*}
  %\[\mbox{either } \quad q\mid A_1 \mbox{ and } q\nmid B_2 \quad \mbox{ or } \quad q\nmid A_1B_2^{M-1}\left((-A)^{M_1}A_1^{N_1-M_1}-\left(-B_2\right)^{N_1-M_1
  %}\right),\]
  where $A_1=\frac{A+(-A)^{q^\ell}}{q}$ with $q^\ell\mid\mid (N-M)$, and $B_2=B/q$;
  \item \label{JKS:I4} when $q\nmid AB$ and $q\mid M$ with $N=s^{\prime}q^k$, $M=sq^k$, $q\nmid \gcd\left(s^{\prime},s\right)$, then the polynomials
   \begin{equation*}
     H_1(x):=x^{s^{\prime}}+Ax^s+B \quad \mbox{and}\quad H_2(x):=\dfrac{Ax^{sq^k}+B+\left(-Ax^s-B\right)^{q^k}}{q}
   \end{equation*}
   are coprime modulo $q$;
         \item \label{JKS:I5} when $q\nmid ABM$, then
     \[q^2\nmid \left(B^{N_1-M_1}N_1^{N_1}-(-1)^{M_1}A^{N_1}M_1^{M_1}(M_1-N_1)^{N_1-M_1} \right).\]
   \end{enumerate}
   \end{thm}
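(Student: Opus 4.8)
The plan is to prove the equivalence directly from the classical Dedekind criterion, which is precisely the iff-test for whether a given prime divides $[\Z_K:\Z[\theta]]$. Recall the criterion: factor $\overline f=\prod_i \overline{g_i}^{\,e_i}$ into distinct monic irreducibles over $\F_q$, lift each $\overline{g_i}$ to a monic $g_i\in\Z[x]$, set $g=\prod_i g_i$ and $h=\prod_i g_i^{e_i-1}$ so that $\overline g\,\overline h=\overline f$, and put $T=(f-gh)/q\in\Z[x]$. Then $q\nmid[\Z_K:\Z[\theta]]$ if and only if $\gcd\big(\overline T,\overline g,\overline h\big)=1$ in $\F_q[x]$; equivalently, iff no repeated factor $\overline{g_i}$ (one with $e_i\ge 2$) divides $\overline T$. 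The whole theorem then reduces to reading off, case by case, the shape of $\overline f\pmod q$ and of $\overline T$.

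First I would record that the five cases are exhaustive among primes $q\mid\Delta(f)$: partition by whether $q\mid A$ and whether $q\mid B$, and when $q\nmid AB$ split further by $q\mid M$ or $q\nmid M$. A short computation with $\overline{f'}$ shows that the apparently missing sub-cases (e.g.\ $q\mid A$, $q\nmid B$, $q\nmid N$) force $\gcd(\overline f,\overline{f'})=1$, hence $q\nmid\Delta(f)$, so they never occur; this is what silently guarantees $q\mid N$ in case \eqref{JKS:I2}, $q\mid(N-M)$ in the relevant part of \eqref{JKS:I3}, and $q\mid N$ in case \eqref{JKS:I4}.

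Then I would run the case analysis, in each case reducing $\overline f\pmod q$, identifying its repeated factors, and computing $\overline T$. In case \eqref{JKS:I1}, $\overline f=x^N$, the only repeated factor is $x$, and $\overline T=\overline{(A/q)}\,x^M+\overline{(B/q)}$, so $x\nmid\overline T$ precisely when $q\nmid(B/q)$, i.e.\ $q^2\nmid B$. In cases \eqref{JKS:I2} and \eqref{JKS:I3} the Frobenius identity $(a+b)^{q^t}=a^{q^t}+b^{q^t}$ in $\F_q[x]$ extracts the inseparable part as a $q^t$-th power: writing $q^j\mid\mid N$ and $q^\ell\mid\mid(N-M)$ gives $\overline f=\big(x^{N/q^j}+\overline B\big)^{q^j}$ in \eqref{JKS:I2} and $\overline f=x^M\big(x^{(N-M)/q^\ell}+\overline A\big)^{q^\ell}$ in \eqref{JKS:I3}, with the displayed outer factors separable. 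Taking the obvious lifts and expanding $gh$, the coefficients of $T$ that survive division by $q$ assemble into exactly the integers $B_1=(B+(-B)^{q^j})/q$ and $A_1=(A+(-A)^{q^\ell})/q$; Dedekind's gcd condition then becomes, in \eqref{JKS:I3}, the union of the $x$-factor clause ``$q\mid A_1$ and $q\nmid B_2$'' with the resultant-type clause governing the separable outer factor, and in \eqref{JKS:I2} (where $\overline f$ has no factor of $x$) the corresponding dichotomy in $A_2=A/q$ and $B_1$. Case \eqref{JKS:I4} is the same mechanism one level up: since $q\mid\gcd(M,N)$ one gets $\overline f=\big(x^{s'}+\overline A x^{s}+\overline B\big)^{q^k}=\overline{H_1}^{\,q^k}$ with $\overline{H_1}$ separable (exactly what $q\nmid\gcd(s',s)$ guarantees), so the repeated factors are those of $\overline{H_1}$, and expanding $gh=H_1^{q^k}$ identifies $\overline T$ with $\overline{H_2}$ up to a unit, whence the condition is $\gcd(\overline{H_1},\overline{H_2})=1$. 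Finally, case \eqref{JKS:I5} is the tame generic situation: each repeated factor of $\overline f$ is a simple double factor, for which Dedekind collapses to ``$q$ exactly divides the non-$B$ part of $\Delta(f)$''; using Swan's formula (Theorem \ref{Thm:Swan}) to write $\Delta(f)=\pm B^{M-1}C^{d_0}$ and checking that $C=d_0^{N_1}R$ with $q\nmid d_0$, this reads $q^2\nmid R$, the displayed expression.

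The main obstacle is the exact-coefficient bookkeeping inside the Frobenius powers in cases \eqref{JKS:I2}--\eqref{JKS:I4}: one must track, via Kummer's theorem, which binomial coefficients $\binom{q^t}{i}$ are divisible by exactly $q^1$, in order to be certain that $\overline T$ really reduces to the advertised polynomials in $A_1,B_1$ and to $\overline{H_2}$, with no extraneous middle terms surviving. A secondary subtlety is verifying the separability claims that justify ``the repeated factors are exactly the factors of the extracted radical,'' which is where the hypotheses $q\nmid\gcd(s',s)$, $\overline A\ne0$, $\overline B\ne0$ are really used; and in case \eqref{JKS:I5} one must confirm that the constant $d_0^{N_1}$ relating $C$ and $R$ is a $q$-adic unit so that the valuation argument goes through.
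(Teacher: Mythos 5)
First, a point of comparison: the paper does not prove this statement at all --- Theorem \ref{Thm:JKS} is quoted verbatim from \cite{JKS2} as an external tool --- so your proposal can only be judged against the source, whose published argument is indeed a Dedekind-criterion analysis of the kind you outline. Your framework (Dedekind's criterion, the observation that the apparently missing sub-cases force $\gcd(\overline{f},\overline{f'})=1$ and hence $q\nmid\Delta(f)$, and the Frobenius extraction $\overline{f}=(x^{N/q^j}+\overline{B})^{q^j}$ in case \eqref{JKS:I2} and $\overline{f}=x^M(x^{(N-M)/q^\ell}+\overline{A})^{q^\ell}$ in case \eqref{JKS:I3}, where the extracted radicals really are separable) is the right one, and cases \eqref{JKS:I1} and \eqref{JKS:I5} are credible sketches.

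Case \eqref{JKS:I4}, however, contains a genuine error, not merely deferred bookkeeping. Your parenthetical claim that $q\nmid\gcd(s',s)$ guarantees $\overline{H_1}$ separable is false: take $f(x)=x^6+x^3+1$ and $q=3$, so that $k=1$, $s'=2$, $s=1$, yet $\overline{H_1}=x^2+x+1=(x-1)^2$ in $\F_3[x]$. This sub-case cannot be discarded as degenerate, since here $H_2(x)=-x(x+1)$ is coprime to $\overline{H_1}$ and indeed $3\nmid[\Z_K:\Z[\theta]]$ (this $f$ is $\Phi_9$, which is monogenic with $\Delta(f)=-3^9$), so the criterion must be \emph{verified}, not vacated, for inseparable $\overline{H_1}$. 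When $\overline{H_1}$ is inseparable the Dedekind radical $g$ is a proper divisor of $H_1$, so $gh\ne H_1^{q^k}$ and your identification of $T=(f-gh)/q$ with $H_2$ collapses at the starting gate. Moreover, even in the separable sub-case the assertion that $\overline{T}$ equals $\overline{H_2}$ ``up to a unit'' is false as a polynomial identity: the middle terms $q^{-1}\binom{q^k}{i}x^{s'i}(Ax^s+B)^{q^k-i}$ with $v_q(i)=k-1$ survive reduction modulo $q$. What is true --- and what a complete proof must establish --- is that $\overline{T}$ and (a unit multiple of) $\overline{H_2}$ agree modulo each irreducible factor of $\overline{H_1}$, using the substitution $x^{s'}\equiv-(Ax^s+B)$, the congruence $q^{-1}\binom{q}{t}\equiv(-1)^{t-1}t^{-1}\pmod{q}$, and $\sum_{t=1}^{q-1}t^{-1}\equiv 0\pmod{q}$ for odd $q$, with $q=2$ handled separately (which is precisely why $H_2$ is defined via $(-Ax^s-B)^{q^k}$ rather than $-(Ax^s+B)^{q^k}$). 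The analogous cancellation is also what you need, and flagged but did not supply, to see that the surviving middle terms in cases \eqref{JKS:I2} and \eqref{JKS:I3} do not spoil the displayed resultant-type conditions; there, at least, the radicals are genuinely separable, so the plan is sound once that computation is done. As it stands, case \eqref{JKS:I4} of your argument is broken and cases \eqref{JKS:I2}--\eqref{JKS:I3} are incomplete.
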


Although the following lemma is little more than an observation, it will be useful enough to state formally.
\begin{lemma}\label{Lem:Equiv}
 Let $f(x)$ and $g(x)$ be monogenic cyclic cubic trinomials. Then $f(x)$ and $g(x)$ are equivalent if and only if $\Delta(f)=\Delta(g)$.
  \end{lemma}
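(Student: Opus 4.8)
The plan is to reduce the biconditional to a statement purely about the number fields involved. Since $f$ and $g$ are cyclic cubics, each generates a field of degree equal to the order of its Galois group, so each is already its own splitting field: writing $f(\theta_f)=0$ and $g(\theta_g)=0$, the splitting fields are $K_f=\Q(\theta_f)$ and $K_g=\Q(\theta_g)$. By \eqref{Eq:Dis-Dis} together with monogenicity, $\Delta(f)=\Delta(K_f)$ and $\Delta(g)=\Delta(K_g)$. Hence the assertion ``$f$ and $g$ are equivalent iff $\Delta(f)=\Delta(g)$'' is the same as ``$K_f=K_g$ iff $\Delta(K_f)=\Delta(K_g)$.''

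The forward implication is then immediate: if $f$ and $g$ are equivalent, then $K_f=K_g$, whence $\Delta(f)=\Delta(K_f)=\Delta(K_g)=\Delta(g)$.

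For the converse I would argue through the conductor. Suppose $\Delta(f)=\Delta(g)$, so $\Delta(K_f)=\Delta(K_g)$. Each $K$ here is a totally real abelian cubic field, so the conductor--discriminant formula gives $\Delta(K)=\mathfrak{f}_K^{\,2}$, where $\mathfrak{f}_K$ is the conductor; taking positive square roots yields $\mathfrak{f}_{K_f}=\mathfrak{f}_{K_g}=:\mathfrak{f}$. When $\mathfrak{f}$ is a single admissible prime power (that is, $\mathfrak{f}=9$ or $\mathfrak{f}=p$ with $p\equiv 1\pmod 3$), the cyclotomic field $\Q(\zeta_{\mathfrak{f}})$ has a cyclic group $\Gal(\Q(\zeta_{\mathfrak{f}})/\Q)$, hence a unique cubic subfield; this forces $K_f=K_g$ and finishes the proof.

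The hard part will be precisely this last step, since the implication ``equal discriminant $\Rightarrow$ equal field'' is \emph{false} for cyclic cubic fields in general: a conductor with $t$ distinct prime factors supports $2^{t-1}$ pairwise distinct cyclic cubic fields, all with the same discriminant $\mathfrak{f}^{2}$ (for instance, the two fields of conductor $63$). So the genuine content is to show that the conductor of a \emph{monogenic} cyclic cubic \emph{trinomial} has only one prime factor. I would extract this from the explicit trinomial discriminant of Theorem~\ref{Thm:Swan} together with the local monogenicity conditions of Theorem~\ref{Thm:JKS}: writing $\Delta(f)=\mathfrak{f}^{2}$ and testing, prime by prime, which divisors $q\mid\Delta(f)$ actually ramify, one aims to show that $\sqrt{\Delta(f)}$ is forced to be a single admissible prime power, after which the cyclotomic uniqueness argument above applies.
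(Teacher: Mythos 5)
Your forward implication is correct and is exactly the paper's argument: monogenicity converts \eqref{Eq:Dis-Dis} into $\Delta(f)=\Delta(K_f)$ and $\Delta(g)=\Delta(K_g)$, so equal splitting fields force equal polynomial discriminants. This is also the only direction the paper ever actually invokes (it is used to show that the members of $\FF_1\cup\FF_2$ are pairwise non-equivalent), and the paper's own proof of the lemma consists of nothing more than this one-line observation.

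The gap is in your converse, and it is fatal to the route you chose. You reduce the converse to the claim that the conductor $\mathfrak{f}=\sqrt{\Delta(f)}$ of a monogenic cyclic cubic trinomial is a single admissible prime power ($9$ or a prime $p\equiv 1\pmod{3}$), and you leave that claim as the unproven ``hard part.'' In fact the claim is false, and the paper's own family $\FF_1$ refutes it: for $k=1$, $f_{1,1}(x)=x^3-39x+91\in\FF_1$ is monogenic and cyclic with $\Delta(f_{1,1})=3^4\cdot 13^2=117^2$, so its splitting field has conductor $117=3^2\cdot 13$, which is not a prime power; and for $k=-1$, $f_{1,-1}(x)=x^3-21x-35\in\FF_1$ has discriminant $63^2$, hence conductor $63$ --- precisely your own example of a conductor carrying two distinct cyclic cubic fields. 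So the cyclotomic-uniqueness endgame you describe can never be reached. What the converse actually requires is subtler: among the $2^{t-1}$ cyclic cubic fields sharing the discriminant $\mathfrak{f}^2$, at most one can be the splitting field of a monogenic cyclic cubic \emph{trinomial}. Your proposal does not address this; and, it should be said, neither does the paper, whose one-line proof implicitly treats ``equal field discriminants'' as equivalent to ``equal fields'' --- exactly the non-trivial point you flagged. Your diagnosis of where the real content lies is therefore sound, but the specific way you propose to supply that content provably cannot work.
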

  \begin{proof} Let $K$ be the splitting field of $f(x)$. Then the lemma is an immediate consequence of the fact that because $f(x)$ is monogenic, it follows  from  \eqref{Eq:Dis-Dis} that $\Delta(f)=\Delta(K)$.
  \end{proof}
%  \begin{defn}
%    We call two trinomials $F_{i,j}(x)$, as defined in Lemma \ref{Lem:Equiv}, \emph{equivalent} if their splitting fields are equal. Otherwise, we say they are %\emph{distinct}.
%  \end{defn}
\section{The Proof of Theorem \ref{Thm:Main}}\label{Section:MainProof}
\begin{proof}%[Proof of Theorem \ref{Thm:Main}]
  For item \eqref{Main:I1}, let $k\in \Z$. Define
  \begin{gather}
   \label{F1} f_{1,k}(x):=x^3-3\delta_1 x+(6k+1)\delta_1\\ \nonumber
    %\mbox{with $9k^2+3k+1$ squarefree,}\\ \nonumber
    \mbox{and}\\
    \label{F2} f_{2,k}(x)=x^3-\delta_2 x+(2k+1)\delta_2,%\\ %\nonumber
    %\mbox{with $27k^2+27k+7$ squarefree.}
  \end{gather}
  where
  %For brevity of notation, let
  \[\delta_1:=9k^2+3k+1 \quad \mbox{and} \quad \delta_2:=27k^2+27k+7.\] Then, we have from Theorem \ref{Thm:Swan} that
  \begin{equation}\label{Eq:Dis}
  \Delta(f_{1,k})=3^4\delta_1^2 \quad \mbox{and} \quad  \Delta(f_{2,k})=\delta_2^2.
  \end{equation} Observe that
  \[f_{1,k}(x)\equiv f_{2,k}(x)\equiv x^3+x+1 \pmod{2}.\] Hence, since $x^3+x+1$ is irreducible in $\F_2[x]$, we deduce that $f_{i,k}(x)$ is irreducible over $\Q$. Therefore, since $\Delta(f_{i,k})$ is a perfect square, it follows that $\Gal(f_{i,k})\simeq C_3$.

  We claim that $f_{i,k}(x)$ is monogenic if and only if $\delta_i$ is squarefree. We give details only for $i=1$ since the approach is similar for $i=2$. Let $\Z_K$ denote the ring of integers of $K=\Q(\theta)$, where $f_{1,k}(\theta)=0$.  Suppose first that $q^2\mid \delta_1$ for some prime $q$. Then, it is easy to see that condition \eqref{JKS:I1} of Theorem \ref{Thm:JKS} is not satisfied, so that $q\mid [\Z_K:\Z[\theta]]$ and $f_{1,k}(x)$ is not monogenic.
  Conversely, suppose that $\delta_1$ is squarefree, and let $q$ be a prime divisor of $\delta_1$. Note that $q\not \in \{2,3\}$. If $q\mid (6k+1)$, then $k\equiv -1/6 \pmod{q}$ and
  \[\delta_1 \equiv 9(-1/6)^2+3(-1/6)+1\equiv 3/4 \not \equiv 0 \pmod{q}.\] Hence, $q^2\nmid (6k+1)\delta_1$ so that condition \eqref{JKS:I1} of Theorem \ref{Thm:JKS} is satisfied. Thus, $q\nmid [\Z_K:\Z[\theta]]$. We must still examine condition \eqref{JKS:I2} of Theorem \ref{Thm:JKS} for the prime $q=3$.  In this case we have that %$B=-(6k+1)\delta_1$,
  \[B=-(6k+1)\delta_1, \quad A_2=-\delta_1 \quad \mbox{and}\quad B_1=\frac{(6k+1)\delta_1-(6k+1)^3\delta_1^3}{3}.\] As previously noted, $3\nmid A_2$ so that the first possibility in condition \eqref{JKS:I2} of Theorem \ref{Thm:JKS} is not satisfied. Using Maple to examine the second possibility in condition \eqref{JKS:I2} yields
  %\begin{align*}
  \[(-B)^{M_1}A_2^{N_1}-\left(-B_1\right)^{N_1}=(6k+1)\delta_1^4+\left(\frac{(6k+1)\delta_1-(6k+1)^3\delta_1^3}{3}\right)^3\equiv 1 \pmod{3},\]
  which confirms that this second possibility of condition \eqref{JKS:I2} is indeed satisfied. Consequently, $3\nmid [\Z_K:\Z[\theta]]$ and $f_{1,k}(x)$ is monogenic, which establishes the claim.

  We can now define two families of monogenic trinomials:
  \begin{gather}\label{Eq:Fams}
  \FF_1:=\{f_{1,k}(x):k\in \Z, \ \mbox{with $\delta_1$ squarefree}\}\\ \nonumber
   \mbox{and}\\
   \FF_2:=\{f_{2,k}(x):k\in \Z, \ \mbox{with $k\ge 0$ and $\delta_2$ squarefree}\}.
  \end{gather} For each $i\in \{1,2\}$, it follows from a result of Nagel \cite{Nagel} that there exist infinitely many $k\in \Z$ such that $\delta_i$ is squarefree. Hence, each of the families $\FF_i$ is an infinite set.

   If $f_{1,k}(x)=f_{2,m}(x)$ for some $k,m\in \Z$, then $\Delta(f_{1,k})=\Delta(f_{2,m})$, so that
   \[k=\frac{3\pm \sqrt{3(108m^2+108m+19)}}{18}\in \Z,\] which is impossible since $108m^2+108m+19\equiv 1\pmod{3}$. Hence, $\FF_1\cap \FF_2=\varnothing$.

   We show next that no two elements of $\FF_1$ are equivalent. By way of contradiction, assume that for some $f_{1,k}(x),f_{1,m}(x)\in \FF_1$ with $k\ne m$ and $f_{1,k}(\alpha)=f_{1,m}(\beta)=0$, we have $\Q(\alpha)=\Q(\beta)$. Since $f_{1,k}(x)$ and $f_{1,m}(x)$ are both monogenic, it follows from Lemma \ref{Lem:Equiv} and \eqref{Eq:Dis} that
  \begin{equation}\label{Eq:1}
  3^4(9k^2+3k+1)=3^4(9m^2+3m+1).
  \end{equation} Using Maple to solve \eqref{Eq:1} with the restriction that $k\ne m$, we get three solutions. The first solution has $k=-m-1/3$, which contradicts the fact that $k,m\in \Z$. The remaining two solutions require that $-36m^2-12m-7$ is the square of an integer. However, it is easy to see that $-36m^2-12m-7<0$ for all $m\in \R$. Therefore, the elements of $\FF_1$ are distinct.

  Turning to $\FF_2$, we apply the same strategy to show that no two elements of $\FF_2$ are equivalent, and we use Maple to solve
  \[(27k^2+27k+7)^2=(27m^2+27m+7)^2,\] where $k\ne m$. In this situation, Maple again gives three solutions. The first solution has $k=-m-1$, which is impossible since $km\ge 0$. The remaining two solutions require that $-324m^2-324m-87$ is the square of an integer, which is impossible since $m\ge 0$. Hence, the elements of $\FF_2$ are distinct.

  We show now that no element of $\FF_1$ is equivalent to some element of $\FF_2$. Using the same approach as before, we assume that
    \begin{equation}\label{Eq:3}
  3^4(9k^2+3k+1)^2=(27m^2+27m+7)^2,
  \end{equation} for some $k,m\in \Z$ with $m\ge 0$. Solving \eqref{Eq:3} using Maple gives four solutions. Two of these solutions require that $-108m^2-108m-55$ is the square of an integer, which is impossible since $m\ge 0$. A third solution has
  \[k=\frac{-3+\sqrt{108m^2+108m+1}}{18},\] which contradicts the fact that $k\in \Z$ since $108m^2+108m+1\equiv 1 \pmod{3}$. The fourth solution has
  $k=(-3-\sqrt{108m^2+108m+1})/18$ and so we arrive at the same contradiction. Thus, we have established the fact that the trinomials in $\FF_1\cup \FF_2$ are all distinct.

  For item \eqref{Main:I2}, let $f(x)=x^3+Ax^2+B$ be a monogenic cyclic trinomial.
  By Theorem \ref{Thm:Swan}, we have that
   \begin{equation}\label{Eq:Delf}
   \Delta(f)=-B(4A^3+27B), %=-d^2b(4d^2a^3+27b).
    \end{equation} which must be a square since $f(x)$ is cyclic. It is straightforward to verify that $\Delta(f)>1$, and we omit the details. We let $q$ be a prime divisor of $\Delta(f)$, % in \eqref{Eq:Delf},
    and we apply  Theorem \ref{Thm:JKS} to obtain necessary conditions on $A$ and $B$ for the monogenicity of $f(x)$.

    Suppose that $\abs{B}>1$ and that $q\mid B$. If $q\mid A$, then since condition \eqref{JKS:I1} of Theorem \ref{Thm:JKS} must be satisfied, we conclude that $q^2\nmid B$. Hence, $q\mid \mid B$. If $q\nmid A$, then since $N-M=1$, we have that $\ell=0$ and $A_1=0$ in condition \eqref{JKS:I3} of Theorem \ref{Thm:JKS}. Thus, $q\mid A_1$, and so $q^2\nmid B$ since $f(x)$ is monogenic. Hence, again we see that $q\mid\mid B$. Therefore, we have shown that $B$ is squarefree.

Suppose next that $q\nmid A$ and $q\nmid B$. If $q=2$, then we see from condition \eqref{JKS:I4} of Theorem \ref{Thm:JKS} that $H_2(x)=0$. Thus, $H_1(x)$ and $H_2(x)$ are not coprime modulo $q=2$, contradicting the fact that $f(x)$ is monogenic. Hence, $q\ne 2$ and $q\nmid 2AB$. Then, from condition \eqref{JKS:I5} of Theorem \ref{Thm:JKS}, we have that $q^2\nmid (4A^3+27B)$, which contradicts the fact that $\Delta(f)$ is a square. It follows that any prime divisor $q$ of $\Delta(f)$ must divide either $A$ or $B$.

Suppose then that $q\mid A$ and $q\nmid B$. Thus, we see from \eqref{Eq:Delf} that $q=3$. Observe if $9\mid A$, then it follows from \eqref{Eq:Delf} that $3^3\mid\mid \Delta(f)$, which contradicts the fact that $\Delta(f)$ is a square. Therefore, $3\mid \mid A$.

\begin{comment}
    Suppose that $q\nmid B$. If $q\mid A$, then we see from \eqref{Eq:Delf} that $q=3$. Observe if $9\mid A$, then it follows from \eqref{Eq:Delf} that $3^3\mid\mid \Delta(f)$, which contradicts the fact that $\Delta(f)$ is a square. Therefore, $3\mid \mid A$. If $q\nmid A$ and $q\mid M$, so that $q=2$, then we see from condition \eqref{JKS:I4} of Theorem \ref{Thm:JKS} that $H_2(x)=0$. Consequently, $H_1(x)$ and $H_2(x)$ are not coprime modulo $q=2$, contradicting the fact that $f(x)$ is monogenic. Thus, condition \eqref{JKS:I4} must be vacuous. That is, if $q\nmid A$ and $q\nmid B$, then $q\ne 2$, so that $q\nmid 2AB$, and we have by condition \eqref{JKS:I5} of Theorem \ref{Thm:JKS} that $q\mid \mid (4A^3+27B)$. Thus, since $\Delta(f)$ is a square, it follows that condition \eqref{JKS:I5} is vacuous. We deduce that every prime dividing $\Delta(f)$ must divide either $A$ or $B$. %Consequently, if $q\mid \Delta(f)$ and $q\ne 3$, then $q\mid B$, or equivalently, if $q\nmid B$, then $q=3$.
\end{comment}

    We provide the following summary to emphasize three facts that we have gleaned from Theorem \ref{Thm:JKS}:
    \begin{gather}
      \label{Info:I1} \mbox{$B$ is squarefree,}\\
      \label{Info:I2} q\mid A \quad \mbox{or} \quad q\mid B,\\
      \label{Info:I0} \mbox{if $q\mid A$ and $q\nmid B$, then $q=3$ and $3\mid \mid A$,}
     \end{gather}
         where $q$ is a prime divisor of $\Delta(f)$.
Since $\Delta(f)$ is a square, we deduce from \eqref{Eq:Delf} and \eqref{Info:I1} that $B\mid (4A^3+27B)$. Hence, $B\mid 2A$, so that $B^2\mid 4A^2$. Thus,
 \begin{gather}
  \nonumber \Delta(f)=B^2Z, \quad \mbox{where} \\ % \quad \mbox{with} \quad Z\in \Z, \quad \mbox{where}\\
 \label{Eq:Z} Z=-\left(\frac{4A^3}{B}+27\right)=-\left(\left(\frac{4A^2}{B^2}\right)AB+27\right)\in \Z
 \end{gather}
 is a square.
 %$\Delta(f)=B^2Z$, with $Z\in \Z$,
%where
%\begin{equation}\label{Eq:Z}
%Z=-\left(\frac{4A^3}{B}+27\right)=-\left(\left(\frac{4A^2}{B^2}\right)AB+27\right)
%\end{equation} is a square.
Note that if $Z=1$, then $-A^3=7B$, which is impossible by \eqref{Info:I1}. Hence, $Z>1$ and we let $q$ be a prime divisor of $Z$. It follows from \eqref{Info:I2} and \eqref{Eq:Z} that $q=3$. Therefore, $Z=3^{2k}$ for some integer $k\ge 1$, and we have from \eqref{Eq:Z} that
\begin{equation}\label{Eq:3^{2k}}
B=\frac{-4A^3}{3^{2k}+27}\in \Z. % -4A^3=B(3^{2k}+27).
\end{equation}
Next, we examine the solutions to \eqref{Eq:3^{2k}} when $k\in \{1,2\}$.

%When $k=0$, from \eqref{Eq:3^{2k}} we get the equation
%$-A^3=7B$, which is impossible by \eqref{Info:I1}.

When $k=1$, we get the equation $-(A/3)^3=B/3$ from \eqref{Eq:3^{2k}} and, by \eqref{Info:I1}, the solutions are easily seen to be
%\[-\frac{A}{3}^3=\frac{B}{3},\]
%which yields the integer coefficient pairs
\[(A,B)\in \{(3,-3), (-3,3)\}.\]
%for $f(x)$.
It is straightforward to verify that both of the trinomials
    \begin{equation}\label{Eq:k=1}
    x^3+3x^2-3 \quad \mbox{and} \quad x^3-3x^2+3
    \end{equation}
    are monogenic and cyclic.

    When $k=2$, we arrive at the equation $-(A/3)^3=B$ from \eqref{Eq:3^{2k}}, and it is easy to see that the solutions are
    \[(A,B)\in \{(-3,1), (3,-1)\}\] by \eqref{Info:I1}. As before, it is straightforward to confirm that both of the trinomials
       \begin{equation}\label{Eq:k=2}
       x^3-3x^2+1 \quad \mbox{and}\quad x^3+3x^2-1
       \end{equation} are monogenic and cyclic.

       We assume now that $k\ge 3$. It is easy to see that $3\mid A$ in \eqref{Eq:3^{2k}}, and since $2^2\mid \mid (3^{2k}+27)$, we can rewrite \eqref{Eq:3^{2k}} as
       \begin{gather}\label{Eq:3^{2k} again}
       \begin{split}
        BW=\left(\frac{-A}{3}\right)^3,\\ %\left(\frac{3^{2k-3}+1}{4}\right)
        \mbox{where \ $W=\dfrac{3^{2k-3}+1}{4}\in \Z$.}
        \end{split}
       \end{gather} %If $2\mid B$ in \eqref{Eq:3^{2k} again}, then $2\mid \mid BW$ by \eqref{Info:I1} and the fact that $2\nmid W$,  contradicting the fact that $BW$ is a cube. Hence, $2\nmid AB$.
       If $3\mid B$, then $3\mid \mid B$ by \eqref{Info:I1}, which implies that $3\mid \mid BW$ since $3\nmid W$, contradicting the fact that $BW$ is a cube  in \eqref{Eq:3^{2k} again}. Thus, $3\nmid B$ and we conclude from \eqref{Info:I0} that $3\mid \mid A$.
    %\begin{comment}
     %Note also that $B^2\mid W$ by \eqref{Info:I1} since $BW$ is a cube. Hence, any prime divisor $q$ of $A/3$ must divide both $B$ and $W$. Consequently, $B$, %$W$ and $A/3$ have precisely the same prime divisors.  Without regard to \eqref{Info:I1}, we show that $f(x)=x^3+Ax^2+B$ is not monogenic for any pair of %integers $A$ and $B$ satisfying \eqref{Eq:3^{2k}}. Observe from \eqref{Eq:3^{2k}} that $3\mid A$, so that $3\mid \Delta(f)$ from \eqref{Eq:Delf}. We also %see from \eqref{Eq:3^{2k}} that if $3\mid B$, then $9\mid A$, which implies that $9\mid B$, contradicting \eqref{Info:I1}. Hence, $3\mid \mid A$, and %$3\nmid B$.
    %\end{comment}
         We therefore examine condition \eqref{JKS:I2} of Theorem \ref{Thm:JKS} with $q=3$.  Straightforward calculations with $B$ as in \eqref{Eq:3^{2k}} show that
       \[A_2=A/3 \quad \mbox{and} \quad B_1=\frac{4A_2^3\left(16A_2^6-(3^{2k-3}+1)^2\right)}{3(3^{2k-3}+1)^3}.\] Since $3\nmid A_2$ and
       \[\frac{16A_2^6-(3^{2k-3}+1)^2}{3}\equiv 2 \pmod{3},\]
       it follows that
       \begin{equation*}
         (-B)^2A_2^3+B_1^3=\frac{16A_2^9\left((3^{2k-3}+1)^7+4\left(\frac{16A_2^6-(3^{2k-3}+1)^2}{3}\right)^3\right)}{(3^{2k-3}+1)^9}\equiv 0 \pmod{3},
       \end{equation*} which proves that $f(x)$ is not monogenic by Theorem \ref{Thm:JKS}. Hence, the four monogenic cyclic trinomials given in \eqref{Eq:k=1} and \eqref{Eq:k=2} are the only monogenic cyclic trinomials of the form $x^3+Ax^2+B$.

              To see that the trinomials in \eqref{Eq:k=1} and \eqref{Eq:k=2}
       %\begin{align*}
       %  f_1(x)&=x^3+3x^2-3\\
       %  f_2(x)&=x^3-3x^2+3\\
       %  f_3&=x^3-3x^2+1\\
       %  f_4(x)&=x^3+3x^2-1
       %\end{align*}
       are equivalent to $T(x)=x^3-3x+1$, suppose that $T(\alpha)=0$. Then $T(x)$ factors over $\Q(\alpha)$ as
       \[T(x)=x^3-3x+1=(x-\alpha)(x-\alpha^2+2)(x+\alpha^2+\alpha-2),\]
        and it is easy to verify that the four trinomials in \eqref{Eq:k=1} and \eqref{Eq:k=2} factor over $\Q(\alpha)$ as:
       \begin{align*}
         x^3+3x^2-3&=(x+\alpha+1)(x+\alpha^2-1)(x-\alpha^2-\alpha+3),\\
         x^3-3x^2+3&=(x-\alpha-1)(x-\alpha^2+1)(x+\alpha^2+\alpha-3),\\
         x^3-3x^2+1&=(x+\alpha-1)(x+\alpha^2-3)(x-\alpha^2-\alpha+1),\\
         x^3+3x^2-1&=(x-\alpha+1)(x-\alpha^2+3)(x+\alpha^2+\alpha-1).\qedhere
       \end{align*}
        \end{proof}

\section{Final Remarks}
Theorem \ref{Thm:Main} gives two infinite mutually exclusive single-parameter families $\FF_1$ and $\FF_2$ of monogenic cyclic cubic trinomials of the form $x^3+Ax+B$. However, these families are not exhaustive. That is, there exist other monogenic cyclic cubic trinomials of the form $x^3+Ax+B$ that are not contained in $\FF_1\cup \FF_2$ and not equivalent to any element of $\FF_1\cup \FF_2$. We do not know if these trinomials can somehow be parameterized into one or more infinite families, or whether they are isolated, and we leave such a task for future research. We end by providing some examples of these trinomials:
\begin{gather*}
  x^3-6447x+199243,\quad  x^3-23907x+1422773,\quad  x^3-66063x+6535601,\\
  x^3-123411x+16687025, \quad x^3-1738191x+882052345,\\
  x^3-47970741x+127882981837.
\end{gather*}

  %\end{align*}
  %\[let $t$ be an indeterminate, and define
  %\[\delta_1(t):=9t^2+3t+1 \quad \mbox{and} \quad \delta_2(t):=27t^2+27t+7.\] Observe that $\delta_1(t)>0$ and $\delta_2(t)>0$ for all $t\in \R$ so that both %$\delta_1(t)$ and $\delta_2(t)$ are irreducible over $\Q$. For each $i\in \{1,2\}$, since $\gcd(\delta_i(0),\delta_i(1),\delta_i(2))=1$,
  %it follows from \cite{Nagel} that there exist infinitely many integers $k$ such that $\delta_i(k)$ is squarefree.
  %Define two collections of trinomials $\{f_{1,k}(x)\}$ and $\{f_{2,k}(x)\}$,  where
  %\[\{f_{1,k}(x): 9k^2+3k+1 \ \mbox{is squarefree}\}\ \mbox{and} \ \{f_{2,k}(x): 27k^2+27k+7 \ \mbox{is squarefree}\},\]
  %and \[\{f_{2,k}(x): 27k^2+27k+7 \ \mbox{is squarefree}\},\] where
  %\begin{gather}
   %\label{F1} f_{1,k}(x)=x^3-3(9k^2+3k+1)x+(6k+1)(9k^2+3k+1)\\ \nonumber
   % \mbox{with $9k^2+3k+1$ squarefree,}\\ \nonumber
   % \mbox{and}\\
   % \label{F2} f_{2,k}(x)=x^3-(27k^2+27k+7)x+(2k+1)(27k^2+27k+7)\\ \nonumber
   % \mbox{with $27k^2+27k+7$ squarefree.}
  %\end{gather}

% For alignments use AmS-LaTeX constructions not \eqnarray.

%% - theorems and proofs
%\begin{thm}[optional text]
% The optional material will be typeset as part of the theorem heading
%\end{thm}

%\begin{proof}[Optional proof heading]
% the proof
%\end{proof}
% An end-of-proof symbol (open box) will be typeset at the
% end of the proof.

%\newpage

\end{document}